\documentclass{article}
\usepackage{amssymb, amsfonts, amsmath, amsthm}
\usepackage{enumerate}
\usepackage{caption}
\usepackage[colorlinks=true,
citecolor=black,
linkcolor=black,
anchorcolor=black,
filecolor=black,
menucolor=black,
urlcolor=black
]{hyperref}
\usepackage[normalem]{ulem}
\usepackage{cancel}
\usepackage{array}
\usepackage{longtable}
\usepackage{tikz}
\usetikzlibrary{calc}   
\usepackage{epsfig,lpic,wrapfig}
\usepackage{bm}
\usepackage{multicol}

\usepackage[utf8]{inputenc}
\usepackage[T1,T2A]{fontenc}
\usepackage[russian,
german,
english]{babel}

\usepackage{csquotes}
\usepackage{tikz}
\usepackage{tikz-3dplot}
\usetikzlibrary{patterns} 

\def\thetitle{Ratio of Intrinsic Metric to Extrinsic Metric and Volume}
\def\theauthors{Berk Ceylan}
\hypersetup{pdftitle={\thetitle},
pdfauthor={\theauthors}}

\newcommand{\Addresses}{{\bigskip\footnotesize

\medskip

\noindent   Berk Ceylan, 
\par\nopagebreak
 \textsc{}
  \par\nopagebreak
  \textit{Email}: \texttt{berk.ceylan@epfl.ch}
  
}}

\theoremstyle{theorem}
\newtheorem{theorem}{Theorem}
\newtheorem{lemma}{Lemma}

\usepackage{sectsty}

\sectionfont{\fontsize{10}{15}\selectfont}

\newcounter{thm}[section]

\def\claim#1{\par\medskip\noindent\refstepcounter{thm}\hbox{\bf\boldmath #1.}
\it\ 
}
\def\endclaim{
\par\medskip}

\newcommand*{\z}[1]{#1\nobreak\discretionary{}%
            {\hbox{$\mathsurround=0pt #1$}}{}}

\begin{document}

\title{\thetitle}
\author{\theauthors}

\date{}
\maketitle

\begin{abstract}
    We study the relationship between the ratio of intrinsic to extrinsic metrics and area. For certain surfaces inside unit ball in $\mathbf{R}^3$ we give lower bound on the maximum of ratio in terms of its area. We also give examples to show non-existence of global lower bounds.
\end{abstract}
\begin{center}
    \section{Introduction}
\end{center}
Let $X\subset Y$ be intrinsic metric spaces and $X$ has the induced metric. Denote by $|xy|_X,|xy|_Y$ the metrics of $X,Y$ respectively. We define the following function which roughly measures how much $X$ is curved inside $Y$:

$K:X\times X \rightarrow\mathbf{R}$, $K(x,y)$ =$
\left\{
	\begin{array}{ll}
		1  & \mbox{if } x =y \\
		\frac{|xy|_X}{|xy|_Y} & \mbox{if } x \neq y
	\end{array}
\right.$.

Observe that $K$ is a continuous function and $K\geq 1$. Our objective is to investigate the relation of this function with the volume of $X$. Let us denote by $K_{max}$ the maximum of $K$. We investigate when it is possible to obtain a lower bound on $K_{max}$ with a lower bound on the area of a surface inside unit ball in $\mathbf{R}^3$. Note also that $K_{max}$ is equal to minimal Lipschitz constant.

For the case of convex surfaces inside unit ball, $K$ is studied at \cite{MR28052}, see also \cite{alexandrovSelectedWorksII}.

Our main objective is to find a general class of surfaces $X\subset B(1) \subset \mathbf{R}^3$ such that by bounding the area from below we can bound $K_{max}$ from below too. Moreover we wish to obtain a lower bound on $K_{max}$ such that as area lower bound goes to infinity $K_{max}$ does so too. It is because of this we need to make assumptions on $X$. As we show in section 3 that there are families of surfaces with $K\leq 2$ and arbitrarily large area. 

Before focusing on surfaces in unit ball, we work in more general setting of arbitrary dimensions. We essentially compare packing numbers of spaces involved to obtain lower bound on $K_{max}$. To obtain packing number comparison we use lower bounds on volume. However to make this approach uniform we have to put additional constraints. One way is to bound the volume of balls in our space by  a constant.  Then on the class of spaces for which volume of balls of certain radius is bounded above by a certain constant, we can obtain the desired relation between $K_{max}$ and volume. With the notation $diam(X)$ for intrinsic diameter of $X$, $a_{X}(r,m)=\sup\limits_{x\in X} \mathcal{H}_m(B(x,r))$ where $\mathcal{H}_m$ denotes $m$-dimensional Hausdorff measure and finally $\beta_Y(r)$ denotes $r$-packing number of $Y$. Then we have:

\begin{theorem}
        Let $X\subset B^n_1$ be an intrinsic metric space for which metric is induced by $B^n_1$, where $B^n_1$ denotes unit ball in $\mathbf{R}^n$. Assume $diam(X) \geq J_n$ and $a_{X}(\frac{J_n}{3},m) \leq C(\frac{J_n}{3},m)$. Then if $\mathcal{H}_m(X)>C(\frac{J_n}{3},m)\cdot \beta_{B^n_1}(\delta)$ we have $K\geq \frac{J_n}{3\delta}$ for some pair of points on $X$.
\end{theorem}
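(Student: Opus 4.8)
The plan is to argue by contraposition. Suppose that $K(x,y)<\frac{J_n}{3\delta}$ for \emph{every} pair of distinct points $x,y\in X$; I will deduce $\mathcal H_m(X)\le C(\tfrac{J_n}{3},m)\cdot\beta_{B^n_1}(\delta)$, which contradicts the hypothesis and hence forces $K(x,y)\ge\frac{J_n}{3\delta}$ for some pair. The whole argument is the packing comparison foreshadowed in the introduction: build a maximal intrinsic net of $X$ at scale $\frac{J_n}{3}$ which, under the bound on $K$, spreads out to a $\delta$-separated subset of $B^n_1$ (so its cardinality is at most $\beta_{B^n_1}(\delta)$), yet is fine enough to control $\mathcal H_m(X)$ from above via the hypothesis on $a_X$.

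First I would fix a maximal subset $\{x_1,\dots,x_N\}\subset X$ that is $\tfrac{J_n}{3}$-separated for the intrinsic metric, that is, $|x_ix_j|_X\ge\tfrac{J_n}{3}$ whenever $i\neq j$; such a set exists by Zorn's lemma. For $i\neq j$, combining $|x_ix_j|_X\ge\tfrac{J_n}{3}$ with the standing assumption $K(x_i,x_j)=\dfrac{|x_ix_j|_X}{|x_ix_j|_{B^n_1}}<\dfrac{J_n}{3\delta}$ gives
\[
|x_ix_j|_{B^n_1}\;>\;\frac{3\delta}{J_n}\,|x_ix_j|_X\;\ge\;\delta .
\]
Hence $\{x_1,\dots,x_N\}$ is a $\delta$-separated subset of the bounded set $B^n_1\subset\mathbf{R}^n$; in particular $N$ is finite and $N\le\beta_{B^n_1}(\delta)$ by definition of the packing number.

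Next I would use the same net to bound $\mathcal H_m(X)$. By maximality, no point of $X$ can be added to the net without destroying the $\tfrac{J_n}{3}$-separation, so every point of $X$ lies in one of the open intrinsic balls $B(x_i,\tfrac{J_n}{3})$; these balls cover $X$, and by subadditivity of Hausdorff measure together with the hypothesis $a_X(\tfrac{J_n}{3},m)\le C(\tfrac{J_n}{3},m)$,
\[
\mathcal H_m(X)\;\le\;\sum_{i=1}^{N}\mathcal H_m\!\left(B\!\left(x_i,\tfrac{J_n}{3}\right)\right)\;\le\;N\cdot a_X\!\left(\tfrac{J_n}{3},m\right)\;\le\;N\cdot C\!\left(\tfrac{J_n}{3},m\right)\;\le\;\beta_{B^n_1}(\delta)\cdot C\!\left(\tfrac{J_n}{3},m\right).
\]
This contradicts the hypothesis $\mathcal H_m(X)>C(\tfrac{J_n}{3},m)\cdot\beta_{B^n_1}(\delta)$. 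The assumption $\mathrm{diam}(X)\ge J_n$ is not needed for this inequality; I read it as a nondegeneracy condition ensuring $X$ is not contained in a single intrinsic ball of radius $\tfrac{J_n}{3}$, so that $N\ge 2$ and the conclusion genuinely asserts something about a pair of distinct points.

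I do not foresee a serious obstacle. The hypothesis on $K$ is used in exactly one line, the estimate $|x_ix_j|_{B^n_1}>\tfrac{3\delta}{J_n}|x_ix_j|_X\ge\delta$, and the rest is the standard comparison of an intrinsic net with an extrinsic packing. The only point deserving care is bookkeeping with conventions: open versus closed balls in the definition of $a_X$, and whether ``separated'' and $\beta_{B^n_1}$ are defined via strict or non-strict inequalities. Any such mismatch can be absorbed into an arbitrarily small perturbation of the radii and leaves the statement intact.
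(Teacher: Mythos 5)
Your proposal is correct and is essentially the paper's own argument (its Lemma~1 applied with $r=\tfrac{J_n}{3}$) run in contrapositive form: the same maximal intrinsic $\tfrac{J_n}{3}$-separated net, the same covering bound $\mathcal H_m(X)\le N\cdot a_X(\tfrac{J_n}{3},m)$, and the same comparison with $\beta_{B^n_1}(\delta)$, where the paper instead argues directly that too many net points force an extrinsically $\delta$-close, intrinsically $\tfrac{J_n}{3}$-far pair.
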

    
Bounding diameter from below by a small constant $J_n$, which is explained in section 2, does not effect the generality of statement. From the theorem it follows from Bishop-Gromov inequality, see \cite{Chavel_2006}, that one can put lower bound on minimal Lipschitz constant of embeddings of Riemannian manifolds with Ricci curvature bounded below in terms of its volume. 

Then in later sections we use more geometric approach to problem in the case that $X$ is a surface in $\mathbf{R}^3$. We first prove an analogous theorem for spaces which we call to be triangular as it is the boundary case for both area and intrinsic metric. Then we extend it to spaces for which admit a triangular skeleton of required property.

In the last section we explain briefly the quantitative study of the number of essentially different pairs with $K$ large.

\begin{centering}
    \section{Spaces With Bounded Balls Yet Large Volume}
\end{centering}

Let $X\subset B^n_1$ be an compact intrinsic metric space with induced metric from $B^n_1$ the unit ball in $\mathbf{R}^n$. As we are interested in the function $K$ which is scale invariant we can assume that $diam(X)\geq \sqrt{\frac{2(n+1)}{n}}=:J_n$. To see this assume $diam(X) < J_n$. Then as extrinsic diameter of $X$ (denoted $diam_{ext}(X)$) is smaller than intrinsic one we have $diam_{ext}(X) < J_n$ but from Jung's Theorem \cite{Jung1901}, we have if $r$ is the radius of minimal enclosing ball of $X$ we get $r\leq diam_{ext}(X) \cdot \frac{1}{J_n}<1$. So $X$ lies on a smaller ball. Hence we can scale up at least until $diam(X)\geq J_n$. Assume from now on this is the case. Then it makes sense to talk about $a_X(r,m):=\sup\limits_{x\in X}\mathcal{H}_m(B_X(x,r))$ for any $r \leq \frac{J_n}{2}$, where $\mathcal{H}_m$ is the Hausdorff measure of dimension $m$. Let us denote by $\beta_Y(\delta)$ the $\delta$-packing number of $Y$. Then we have our key lemma:

\begin{lemma}
    Assume $X$ as above with $dim_H(X)=m$. Then if $\mathcal{H}_m(X)> a_{X}(r,m)\cdot \beta_{B^n_1}(\delta)$, we have $K(x,y)\geq \frac{r}{\delta}$ for some pair of points $x,y \in X$.
\end{lemma}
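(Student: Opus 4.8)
The plan is to argue by contradiction via a packing/covering comparison. Suppose that $K(x,y) < r/\delta$ for \emph{every} pair $x,y \in X$; we will show $\mathcal{H}_m(X) \le a_X(r,m)\cdot \beta_{B^n_1}(\delta)$, contradicting the hypothesis. The key observation is that the assumption $K < r/\delta$ means $|xy|_Y < \frac{\delta}{r}|xy|_X$ for all $x \ne y$, i.e. the inclusion map $X \hookrightarrow B^n_1$ is, up to the factor $\delta/r$, \emph{expanding} in the reverse direction: whenever two points of $X$ are at extrinsic distance $\ge \delta$, their intrinsic distance is $> r$. Equivalently, the identity map from $(X,|\cdot|_Y)$ to $(X,|\cdot|_X)$ sends $\delta$-separated sets to $r$-separated sets.

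First I would take a maximal $\delta$-separated subset $S = \{p_1,\dots,p_N\}$ of $X$ with respect to the \emph{extrinsic} metric, so that $N \le \beta_{B^n_1}(\delta)$ since $S$ is a $\delta$-packing of $B^n_1$. By maximality the extrinsic $\delta$-balls centered at the $p_i$ cover $X$; but more usefully, in the \emph{intrinsic} metric the balls $B_X(p_i, r)$ cover $X$ — indeed if $x \in X$ is within extrinsic distance $\delta$ of some $p_i$, then the contrapositive of $|p_i x|_Y \ge \delta \Rightarrow |p_i x|_X > r$ is not quite what we need, so I would instead phrase it as: the sets $\{x : |p_i x|_X \le r\}$ cover $X$, because any $x$ fails to be $\delta$-separated from $S$, hence $|p_i x|_Y < \delta$ for some $i$, and then $K(p_i,x) < r/\delta$ forces $|p_i x|_X < r$ (handling $x = p_i$ trivially).

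Then I would conclude by subadditivity and monotonicity of Hausdorff measure: since $m = \dim_H(X)$ and the $B_X(p_i,r)$ cover $X$,
\[
\mathcal{H}_m(X) \le \sum_{i=1}^N \mathcal{H}_m\bigl(B_X(p_i,r)\bigr) \le N \cdot a_X(r,m) \le \beta_{B^n_1}(\delta)\cdot a_X(r,m),
\]
which is the desired contradiction. (One should check $r \le J_n/2$ so that $a_X(r,m)$ is defined, as set up in the preceding discussion.)

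The step I expect to require the most care is the covering claim — making sure the direction of the inequality is right and that the $\delta$-separation is taken in the extrinsic metric while the resulting $r$-cover lives in the intrinsic metric. A secondary subtlety is whether a maximal extrinsic $\delta$-separated set exists and is finite: finiteness follows from compactness of $X$ (the extrinsic $\delta/2$-balls are disjoint and $X$ is bounded), and $N \le \beta_{B^n_1}(\delta)$ holds essentially by definition of packing number once one fixes the convention (packing by points at pairwise distance $\ge \delta$ versus disjoint balls of radius $\delta/2$); I would just align the convention with whatever is used in the statement of Theorem 1. Everything else is the standard measure-theoretic bookkeeping above.
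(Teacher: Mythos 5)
Your proof is correct, and it is essentially the contrapositive of the paper's argument with the roles of the two metrics swapped in the choice of net. The paper argues directly: it takes a maximal set of points of $X$ that are pairwise at \emph{intrinsic} distance at least $r$; maximality makes the intrinsic $r$-balls around these points cover $X$, so the hypothesis $\mathcal{H}_m(X) > a_X(r,m)\cdot\beta_{B^n_1}(\delta)$ forces the number of chosen points to exceed $\beta_{B^n_1}(\delta)$, and the pigeonhole principle then yields a pair at extrinsic distance at most $\delta$ whose intrinsic distance is at least $r$ by construction. You instead negate the conclusion, take a maximal \emph{extrinsically} $\delta$-separated set (of size at most $\beta_{B^n_1}(\delta)$), use $K < r/\delta$ to upgrade extrinsic $\delta$-closeness to intrinsic distance less than $r$, and contradict the measure hypothesis; your covering step and the use of the strict inequality are sound, and the convention issue you flag (pairwise distance $>\delta$ versus $\geq\delta$) is exactly the point where the paper is also implicitly fixing a convention, since its pigeonhole step concludes a pair at distance $\leq\delta$. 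The counting is the same in both versions; the paper's direct form has the mild advantage that the bad pair is exhibited among the chosen net points, which is what Section 5 exploits when it inflates the packing number by $N$ to extract several well-separated pairs with large $K$, whereas your contradiction form gives existence only.
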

\begin{proof}
    Let $x_i\in X$ , $i=1,...,m$ such that each pair is at least $r$ away from each other and $m$ is maximal. Therefore the balls $B_i := B_X(x_i,r)$ cover $X$. Hence we have $m \cdot a_{X}(r,m) \geq \mathcal{H}_m(X) > a_{X}(r,m) \beta_{B^n_1}(\delta) $. Then $m> \beta_{B^n_1}(\delta)$, so among $x_i$ there is a pair with $|x_ix_j|_{B^n_1} \leq \delta$. Since $x_j \notin B_i$ we have $|x_ix_j|_X\geq r$. Thus $K(x_i,x_j)= \frac{|x_ix_j|_X}{|x_ix_j|_{B^n_1}}\geq \frac{r}{\delta}$.
\end{proof}

Let $C(\frac{J_n}{3},m) >0$ be a constant e.g. it makes sense to put volume of a ball of radius $\frac{J_n}{3}$ in $\mathbf{R}^m$. We have the following theorem mentioned in the introduction whose proof follows from Lemma 1:

\begin{theorem}
    Let $X\subset B^n_1$ be an intrinsic metric space for which metric is induced by $B^n_1$, where $B^n_1$ denotes unit ball in $\mathbf{R}^n$. Assume $diam(X) \geq J_n$ and $a_{X}(\frac{J_n}{3},m) \leq C(\frac{J_n}{3},m)$. Then if $\mathcal{H}_m(X)>C(\frac{J_n}{3},m)\cdot \beta_{B^n_1}(\delta)$ we have $K\geq \frac{J_n}{3\delta}$ for some pair of points on $X$.
\end{theorem}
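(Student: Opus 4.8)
The plan is to derive this theorem as a direct specialization of Lemma 1. The key observation is that the hypotheses of the theorem are engineered precisely so that Lemma 1 applies with the particular choices $r = \frac{J_n}{3}$ and the given $\delta$. So the first step is simply to instantiate Lemma 1 at $r = \frac{J_n}{3}$. For this to be legitimate I need $r \leq \frac{J_n}{2}$, which holds since $\frac{J_n}{3} < \frac{J_n}{2}$; this is what makes $a_X(\frac{J_n}{3}, m)$ a well-defined quantity, as noted in the discussion preceding the lemma (and it uses the normalization $\mathrm{diam}(X) \geq J_n$, which is assumed).

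Next I would chain the inequalities. From the hypothesis $a_X(\frac{J_n}{3}, m) \leq C(\frac{J_n}{3}, m)$ and the hypothesis $\mathcal{H}_m(X) > C(\frac{J_n}{3}, m) \cdot \beta_{B^n_1}(\delta)$, monotonicity gives
\[
\mathcal{H}_m(X) > C(\tfrac{J_n}{3}, m) \cdot \beta_{B^n_1}(\delta) \geq a_X(\tfrac{J_n}{3}, m) \cdot \beta_{B^n_1}(\delta).
\]
This is exactly the hypothesis $\mathcal{H}_m(X) > a_X(r,m) \cdot \beta_{B^n_1}(\delta)$ required by Lemma 1 with $r = \frac{J_n}{3}$. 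One should also check $\dim_H(X) = m$ is being used consistently — it is implicit here that the $m$ appearing in $a_X$ and in $\mathcal{H}_m$ is the Hausdorff dimension of $X$, matching the setup of Lemma 1. Applying Lemma 1 then yields a pair of points $x, y \in X$ with $K(x,y) \geq \frac{r}{\delta} = \frac{J_n}{3\delta}$, which is the conclusion.

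Honestly, there is no real obstacle here: the theorem is a corollary of Lemma 1, and the only "content" is the bookkeeping of verifying $\frac{J_n}{3} \leq \frac{J_n}{2}$ so the definitions make sense, and that the normalization $\mathrm{diam}(X) \geq J_n$ (justified in Section 2 via Jung's theorem and scale-invariance of $K$) is in force. If I wanted to add value beyond a one-line deduction, I would remark on why $r = \frac{J_n}{3}$ rather than some other fraction of $J_n$ is the natural choice — presumably it is the value that later gets used when specializing to surfaces in $\mathbf{R}^3$, or it is chosen to leave room ($\frac{J_n}{3}$ versus the maximal allowed $\frac{J_n}{2}$) for a triangle-inequality argument elsewhere — but for the proof itself this choice is arbitrary and the argument is pure substitution.
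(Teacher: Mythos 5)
Your proposal is correct and matches the paper's intent exactly: the paper itself presents this theorem as an immediate consequence of Lemma 1 (taking $r=\frac{J_n}{3}$ and replacing $a_X(\frac{J_n}{3},m)$ by the assumed upper bound $C(\frac{J_n}{3},m)$), with no further argument given. Your bookkeeping of the inequality chain and the check that $\frac{J_n}{3}\leq\frac{J_n}{2}$ is precisely the substitution the paper leaves implicit.
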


The important point here is that lower bound on $K$ only depends $\delta$. And as $\delta$ approaches $0$, lower bound on $K$ diverges. Which roughly says that $X$ has to "curve" increasingly more to have a large volume.

\begin{centering}
    \section{Families of Surfaces With Bounded $K$ But Diverging Volume}
\end{centering}

Phenomena that lower bound on $K_{max}$ diverging is not true without additional assumptions on $X$ as is seen by following example which is communicated to us by Alexander Lytchak. 

Let $I^3$ denote the unit cube. Uniformly divide $I^3$ into smaller cubes of side length $\frac{1}{N}$, for $N \geq 1$ an integer. Then consider union of $0$-skeleton and $1$-skeleton of the partition. At each vertex put a square of side length $(\frac{1}{N})^{5/4}$ parallel to one of the faces of $I^3$. Define $X_N$ to be union of small squares and $1$-skeleton, see Figure 1. We have $area(X_N)=N^3\cdot (\frac{1}{N})^{5/2}=\sqrt{N}$. It is a simple exercise to check that $K$ is bounded on $X_N$, say by 2. 

Note that one can thicken $1$-skeleton and arrange its position so that $X_N$ becomes a smooth surface.

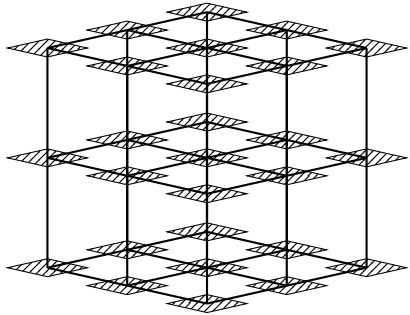
\begin{figure}[h]
    \centering
    \tdplotsetmaincoords{77}{135}
\begin{tikzpicture}[tdplot_main_coords, scale=3]

\def\s{1/4} 

\foreach \x in {0,0.5,1} {
    \foreach \y in {0,0.5,1} {
        \foreach \z in {0,0.5,1} {
            \draw[pattern=north east lines, pattern color=black] 
                (\x-\s/2,\y-\s/2,\z) 
                -- (\x+\s/2,\y-\s/2,\z) 
                -- (\x+\s/2,\y+\s/2,\z) 
                -- (\x-\s/2,\y+\s/2,\z) 
                -- cycle;
        }
    }
}

\foreach \x in {0,0.5} {
    \foreach \y in {0,0.5,1} {
        \foreach \z in {0,0.5,1} {
            \draw[black, thick] (\x,\y,\z) -- (\x+0.5,\y,\z);
        }
    }
}

\foreach \x in {0,0.5,1} {
    \foreach \y in {0,0.5} {
        \foreach \z in {0,0.5,1} {
            \draw[black, thick] (\x,\y,\z) -- (\x,\y+0.5,\z);
        }
    }
}

\foreach \x in {0,0.5,1} {
    \foreach \y in {0,0.5,1} {
        \foreach \z in {0,0.5} {
            \draw[black, thick] (\x,\y,\z) -- (\x,\y,\z+0.5);
        }
    }
}

\end{tikzpicture}
    \caption{Picture of $X_2$.}
    \label{fig:placeholder}
\end{figure}

There are also families of examples with fixed genus. Reader can take hint from the Figure 1. and produce them.

\begin{centering}
    \section{Triangular Spaces}     
\end{centering}

Let $x\in\mathbf{R}^3$ and $y_1,...,y_k\in \partial B(x,R)$ where $R >0$ is arbitrary. Let $X=\cup_{i=1}^{k} \Delta(y_ixy_{i+1})$, where $\Delta(y_ixy_{i+1})$ denotes the triangle determined by $x,y_i,y_{i+1}$ and $y_{k+1}=y_1$. We additionally assume that $X$ is a topological manifold to not allow self-intersections. Let us call such $X$ \textit{triangular}. We will find a lower bound on the area for triangular $X$ such that there will be a pair of points with $K\geq \frac{R}{\delta}$ where $\delta$ depends on the area lower bound. First let us investigate geometry of triangular spaces.

\begin{figure}[h!]
    \centering
    
    \begin{tikzpicture}[scale=3, line cap=round, line join=round]

\coordinate (A) at (0.11, 0.01);      
\coordinate (B) at (-1.07, -0.05);
\coordinate (C) at (-0.88, -0.57);
\coordinate (D) at (-1.15, -0.39);
\coordinate (E) at (-0.72, -0.19);
\coordinate (F) at (-0.38, -0.58);
\coordinate (G) at (0.05, -0.27);
\coordinate (H) at (0.40, -0.56);
\coordinate (I) at (0.75, -0.10);
\coordinate (J) at (1.05, -0.41);
\coordinate (K) at (1.08, 0.04);

\draw[very thick]
  (C)--(E)--(F)--(G)--(H)--(I)--(J)--(K);

  \draw[very thick]
  (B)--(D)
  (B)--(C);

\foreach \p in {B,C,D,E,F,G,H,I,J,K}{
  \draw[thin,gray!60] (A)--(\p);
}

\draw[very thick]
  
  (A)--(E)
  (A)--(F)
  (A)--(G)
  (A)--(H)
  (A)--(I)
  (A)--(B)
  
  (A)--(K);


\end{tikzpicture}
    \caption{A triangular space.}
    \label{fig:placeholder}
\end{figure}
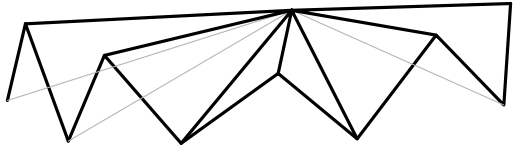

\begin{lemma}
    Let $X$ triangular with vertices, $x,y_1,...,y_{k}$. Let $\sigma:y_i\rightarrow y_j$ be a minimal path joining $y_i$ to $y_j$. Let $\psi(i,j)$ be the sum of angles at $x$ of all the faces $\sigma$ passes through. Then if $\psi(i,j) \geq \dfrac{\pi}{3}$ we have $|y_iy_j|\geq R$.
\end{lemma}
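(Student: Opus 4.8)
The plan is to unfold the band of faces that $\sigma$ crosses into the Euclidean plane and reduce the claim to an elementary computation in an isoceles triangle.

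First I would dispose of degenerate positions of $\sigma$. If $\sigma$ passes through the apex $x$, then since $y_i,y_j\in\partial B(x,R)$ the spokes $xy_i,xy_j$ are segments of $X$ whose ambient length is $R$, so $|y_ix|=|xy_j|=R$ and hence $|y_iy_j|=2R\ge R$. Otherwise I would note that each vertex $y_m$ has cone angle $<2\pi$ in $X$ — it lies only on the two isoceles triangles $\Delta(y_{m-1}xy_m)$ and $\Delta(y_mxy_{m+1})$, whose base angles are each $<\pi/2$ — so a minimal path cannot pass through any $y_m$ with $m\ne i,j$. Consequently $\sigma$ meets the $1$-skeleton of $X$ only by crossing spokes transversally in their interiors; and since the faces are glued cyclically along spokes, $\sigma$ crosses a consecutive run of spokes, once each, passing through faces $F_1,\dots,F_p$ in order with $y_i\in F_1$ and $y_j\in F_p$.

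Then I would develop $F_1,\dots,F_p$ isometrically into $\mathbf{R}^2$, gluing $F_{t+1}$ to $F_t$ along their shared spoke. Every spoke emanates from $x$, so all copies of $x$ collapse to a single point $\tilde x$, each $F_t$ becomes a Euclidean triangle with two sides of length $R$ issuing from $\tilde x$, and the developed fan has total angle $\psi(i,j)$ at $\tilde x$. The image $\tilde\sigma$ of $\sigma$ has length $|y_iy_j|$, joins $\tilde y_i$ to $\tilde y_j$ with $|\tilde x\tilde y_i|=|\tilde x\tilde y_j|=R$, and is a straight segment, being the development of a minimal geodesic that avoids $\tilde x$. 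The crucial step is the angle accounting: the polar angle about $\tilde x$ varies monotonically along $\tilde\sigma$ (a line has constant‑sign angular velocity about any point not on it), and inside each $F_t$ it changes by exactly the angle of $F_t$ at $x$, since $\tilde\sigma$ enters $\tilde F_t$ on one bounding spoke‑ray and leaves on the adjacent one. Summing, $\tilde y_i$ and $\tilde y_j$ subtend the angle $\psi(i,j)$ at $\tilde x$; in particular this forces $\psi(i,j)<\pi$, consistently with the hypothesis $\psi(i,j)\ge\pi/3$.

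Finally, in the isoceles triangle $\tilde y_i\tilde x\tilde y_j$ with legs $R$ and apex angle $\psi(i,j)\in[\pi/3,\pi)$, the law of cosines gives $|y_iy_j|=|\tilde y_i\tilde y_j|=2R\sin\tfrac{\psi(i,j)}{2}\ge 2R\sin\tfrac{\pi}{6}=R$. I expect the main obstacle to be the third paragraph: checking that the development is a genuine isometry onto its image (this is automatic once one knows the fan angle is $<\pi$, so there is no self‑overlap), that $\tilde\sigma$ really is straight, and above all carrying out the bookkeeping that identifies $\psi(i,j)$ with $\angle\tilde y_i\tilde x\tilde y_j$; after that the conclusion is immediate. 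I would also verify separately the borderline situations in which $\sigma$ momentarily runs along a spoke or a base edge, but since a minimal path stays in the interior of $X$ away from its endpoints these do not actually occur.
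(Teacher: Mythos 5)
Your proof is correct and follows essentially the same route as the paper's: unfold the strip of faces crossed by $\sigma$ into a planar fan about the apex, observe the unfolded path is a straight chord of the circle of radius $R$ subtending angle $\psi(i,j)$, and conclude via $2R\sin\bigl(\tfrac{\psi(i,j)}{2}\bigr)\ge R$ for $\psi(i,j)\ge \tfrac{\pi}{3}$, with the through-apex case giving $2R$ (the paper treats $\psi\ge\pi$ and $\psi>2\pi$ by explicit case analysis, where you instead show $\psi<\pi$ is automatic once the path avoids $x$). Your extra bookkeeping (no passage through intermediate vertices, consecutive single crossings of spokes, monotone polar angle) only makes explicit what the paper leaves implicit, so the two arguments coincide in substance.
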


\begin{proof}
    Let us denote by $\Delta_{i_1},...,\Delta_{i_n}$ the faces of $X$ that $\sigma$ passes through. Let us also denote by $\theta_{i_1},...,\theta_{i_n}$ the angles of $\Delta_{i_l}$'s at the vertex $x$. Assume that $\psi(i,j)=\sum_l (\theta_{i_l)}\leq 2\pi$. Then on the plane pick a point $\bar{x}$ and on the circle of radius $R$ around $\bar{x}$ pick points $\bar{p}_l$ for $l=1,...,n$ such that they go in counter-clockwise order and $\angle(\bar{p}_l,x,\bar{p}_{l+1})=\theta_{i_l}$ for $l=1,...,n-1$. If  $X'$ is the union of the triangles $\Delta(\bar{p}_l,x,\bar{p}_{l+1})$ considered with the intrinsic metric, then $|\bar{p}_1\bar{p}_n|_{X'}=|y_iy_j|_X$. Observe if $\psi(i,j)=\sum_l (\theta_{i_l})\geq \pi$ then $|\bar{p}_1\bar{p}_n|_{X'}=2R$ as going from $\bar{p}_1$ to $\bar{x}$ and to $\bar{p}_n$ is the shortest path. So assume $\psi(i,j)< \pi$. From chord length formula we have $|\bar{p}_1\bar{p}_n|_{X'}=2R\sin({\dfrac{\psi(i,j)}{2}})$. We have $2R\sin({\dfrac{\psi(i,j)}{2}}) \geq R$ if and only if $\sin({\dfrac{\psi(i,j)}{2}})\geq \dfrac{1}{2}$ so that $\psi(i,j)\geq\dfrac{\pi}{3}$.

    If $\psi(i,j)=\sum_l (\theta_{i_l})> 2\pi$ then restrict the triangles until sum of the angles is at most $2\pi$ and observe that $|y_iy_j|_X=2R$.
\end{proof}

\begin{figure}[h!]
    \centering
    
    \begin{tikzpicture}[scale=2.4, line cap=round, line join=round]

\def\r{1}        
\def\n{8}        
\def\angleSpan{140} 

\coordinate (X) at (0,0);

\draw[thick] (X) ++(90:\r) arc[start angle=90, end angle=-50, radius=\r];

\foreach \i in {1,...,\n}{
  \coordinate (P\i) at ({\r*cos(90 - (\i-1)*\angleSpan/(\n-1))},
                        {\r*sin(90 - (\i-1)*\angleSpan/(\n-1))});
}

\foreach \i in {1,...,\n}{
  \draw[thick] (X) -- (P\i);
}

\foreach \i [evaluate=\i as \next using int(\i+1)] in {1,...,\n}{
  \ifnum\next<\numexpr\n+1
    \draw[thick] (P\i) -- (P\next);
  \fi
}

\draw[dashed, thick, gray] (P1) -- (P\n);

\foreach \i in {1,...,\n}{
  \fill (P\i) circle (0.015);
}
\fill (X) circle (0.015);

\node[below left=-1pt] at (X) {$\bar{x}$};
\node[above] at (P1) {$\bar{p}_1$};
\node[below right=-1pt] at (P\n) {$\bar{p}_l$};

\end{tikzpicture}
    \caption{Chord in the development is geodesic.}
    \label{fig:placeholder}
\end{figure}
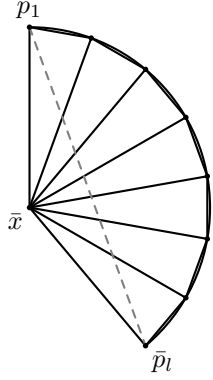

\begin{lemma}
    Let $X$ triangular with vertices, $x,y_1,...,y_{k}$. Then we have 
    
    $\dfrac{k}{2}\geq \dfrac{c(R)}{R^2}$.
\end{lemma}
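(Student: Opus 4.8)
The plan is to convert the statement into a pure area-counting estimate: bound $\mathcal{H}_2(X)$ from above by a multiple of $k$, so that a lower bound on the area of $X$ (the quantity I write as $c(R)$) immediately forces $k$ to be large. The only structural input I will use is that $X$ is the union of the $k$ Euclidean triangles $\Delta_i:=\Delta(y_ixy_{i+1})$, together with the fact that in each $\Delta_i$ the two sides meeting at $x$, namely $xy_i$ and $xy_{i+1}$, have length exactly $R$.

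First I would invoke subadditivity of the $2$-dimensional Hausdorff measure to write $\mathcal{H}_2(X)\le\sum_{i=1}^{k}\mathcal{H}_2(\Delta_i)$; the manifold hypothesis on $X$ in fact makes this an equality, since the triangles then overlap only along edges, but subadditivity alone is all that is needed for this direction. Next, letting $\theta_i\in(0,\pi)$ denote the angle of $\Delta_i$ at $x$, the standard formula for the area of a triangle gives $\mathcal{H}_2(\Delta_i)=\tfrac12 R^2\sin\theta_i\le\tfrac12 R^2$. Summing over $i$ yields $\mathcal{H}_2(X)\le\tfrac{k}{2}R^2$, and dividing by $R^2$ gives $\tfrac{k}{2}\ge\mathcal{H}_2(X)/R^2$, which is exactly the claimed inequality once $c(R)$ is read as (a lower bound for) the area of $X$.

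I do not anticipate a genuine obstacle here: the only point worth a word is that the decomposition $X=\bigcup_i\Delta_i$ be compatible with measure, and this is automatic from subadditivity and does not even require that $X$ be a manifold. The manifold hypothesis is instead what will be needed afterwards — to control intrinsic distances between the $y_i$ via Lemma 2 and to run a pigeonhole argument over the vertices $y_i$ lying on $\partial B(x,R)\subset\mathbf{R}^3$ — so this counting lemma is best viewed simply as the step that produces "enough" vertices on the sphere to make that pigeonhole argument effective.
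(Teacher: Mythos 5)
Your proposal is correct and follows essentially the same route as the paper: bound the area of each triangle $\Delta(y_ixy_{i+1})$ by $\tfrac{R^2}{2}$ using the angle at $x$ and the two sides of length $R$, sum over the $k$ triangles, and compare with $c(R)$ read as a lower bound on $\operatorname{area}(X)$. The only cosmetic difference is that you use the direct formula $\tfrac12 R^2\sin\theta_i$ (together with subadditivity of $\mathcal{H}_2$) where the paper derives the same bound via Heron's formula.
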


\begin{proof}
First observe that for each triangle $\Delta_i=\Delta(y_i,x,y_{i+1})$ we have 

$area(\Delta_i)\leq \dfrac{R^2}{2}$. To see that we use Heron's formula with $\angle(y_i,x,y_{i+1})=\theta_i$:

\begin{alignat*}{5}
area(\Delta_i)&=\dfrac{R\cdot sin(\dfrac{\theta_i}{2})}{2}\cdot \sqrt{4R^2-4R^2sin(\dfrac{\theta_i}{2})^2}\\
&=R^2sin(\dfrac{\theta_i}{2})cos(\dfrac{\theta_i}{2})\\
&=R^2sin(\theta_i)\\
&\leq \dfrac{R^2}{2}.
\end{alignat*}

Then we have, $c(R) \leq area(X)=\sum_i area(\Delta_i)\leq k\cdot \dfrac{R^2}{2}$.
     
\end{proof}
\begin{theorem}
    Let $X$ triangular with vertices, $x,y_1,...,y_{k}$. If $area(X)> c(R)$ then there is a pair among $\{y_i\}$ such that $K\geq \dfrac{R}{\delta}$.
\end{theorem}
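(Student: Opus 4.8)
The plan is to mimic the packing argument of Lemma 1, using Lemma 2 --- rather than a mere non-membership in a metric ball --- to lower-bound the intrinsic distance between the chosen pair: I will produce many vertices on the sphere $\partial B(x,R)$, any two of which are intrinsically at least $R$ apart, and then pigeonhole them into the ball $B(x,R)$ to find a pair that is extrinsically at most $\delta$ apart. First I would convert the area hypothesis into a bound on the total angle $\Theta:=\sum_{i=1}^{k}\theta_i$ at $x$. From the computation in the proof of Lemma 3, $area(\Delta_i)=R^2\sin(\theta_i/2)\cos(\theta_i/2)=\tfrac{R^2}{2}\sin\theta_i\le\tfrac{R^2}{2}\theta_i$, so $area(X)=\sum_i area(\Delta_i)\le\tfrac{R^2}{2}\Theta$, and hence $area(X)>c(R)$ forces $\Theta>\tfrac{2c(R)}{R^2}$.

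Next I would partition the cyclically ordered triangles $\Delta_1,\dots,\Delta_k$ into $p$ consecutive blocks, each of angular measure (the sum of the $\theta_i$ of its triangles) at least $\tfrac{\pi}{3}$: going around the fan and accumulating angle, I close off a block as soon as its accumulated angle reaches $\tfrac{\pi}{3}$, and I merge a possible final short block into the previous one. Since each $\theta_i<\pi$, every block then has angular measure in $[\tfrac{\pi}{3},2\pi)$, so $\Theta<2\pi p$ and therefore $p>\tfrac{\Theta}{2\pi}>\tfrac{c(R)}{\pi R^2}$. (A harmless lower bound on $c(R)$, of order $R^2$, ensures $p\ge 2$, so that there are at least two blocks.) Write $v_1,\dots,v_p\in\{y_1,\dots,y_k\}$ for the block-boundary vertices.

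I then claim $|v_sv_t|_X\ge R$ for every pair $s\ne t$. Indeed, each of the two arcs of the fan joining $v_s$ to $v_t$ is a nonempty union of whole blocks, hence has angular measure at least $\tfrac{\pi}{3}$; therefore the angular measure $\psi$ swept by a minimizing path from $v_s$ to $v_t$ in $X$ is at least $\tfrac{\pi}{3}$ (and in the degenerate case of a minimizer through the apex $x$ one has $|v_sv_t|_X=2R$ outright). Lemma 2 now gives $|v_sv_t|_X\ge R$.

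Finally, all the $v_s$ lie on $\partial B(x,R)\subset B(x,R)$, so once $\delta$ is small enough that $\beta_{B(x,R)}(\delta)<p$ --- which, since $p>\tfrac{c(R)}{\pi R^2}$, holds whenever $\beta_{B(x,R)}(\delta)\le\tfrac{c(R)}{\pi R^2}$ --- the pigeonhole principle yields indices $s\ne t$ with $|v_sv_t|_{\mathbf R^3}\le\delta$, whence $K(v_s,v_t)=\dfrac{|v_sv_t|_X}{|v_sv_t|_{\mathbf R^3}}\ge\dfrac{R}{\delta}$. Since $\beta_{B(x,R)}(\delta)\to\infty$ as $\delta\to 0$, the bound $R/\delta$ obtained this way diverges as $c(R)\to\infty$, which is the point of the statement. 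The step I expect to be the main obstacle is the block construction: the blocks must be kept small enough (this uses $\theta_i<\pi$ and the handling of the leftover block) for $p$ to grow with the area, yet large enough ($\ge\tfrac{\pi}{3}$ each) that \emph{every} pair of boundary vertices --- not merely one well-chosen pair --- is intrinsically $\ge R$ apart via Lemma 2; the latter rests on the observation that a minimizing path sweeps angular measure $\ge\tfrac{\pi}{3}$ whichever way it runs around the fan.
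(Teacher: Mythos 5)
Your proposal is correct and follows essentially the same route as the paper: convert the area bound into a total-angle bound at $x$, select a family of vertices whose pairwise angular separation is at least $\pi/3$ (so that Lemma 2 forces intrinsic distance $\ge R$ between any two of them), and then pigeonhole against the packing number of the ball to find a pair at extrinsic distance $\le\delta$. Your consecutive-block construction is just a more explicit version of the paper's inductive selection of the $a_i$, and you also make explicit the quantitative relation between $c(R)$, $\delta$ and $\beta_{B(R)}(\delta)$ that the paper leaves implicit.
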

\begin{proof}
   Put $a_1=y_1.$ Inductively given $a_i=y_j$ we put $a_{i+1}=y_l$ such that $l>j+1$, $\psi(j,l)\geq \dfrac{\pi}{3}$ and $\psi(l,1) \geq \dfrac{\pi}{3}$. From the lower bound on area observe that with the triangles in $X$ we can cover a disc of radius $R$ at least $\beta_{B(R)}(\delta)$ times. Hence if selection $a_1,...,a_N$ maximal, then $N>\beta_{B(R)}(\delta)$. Thus there is a pair such that $|a_ia_j|_{B(R)}\leq\delta$ and as $|a_ia_j|_X\geq R$ from Lemma 3 we have $K(a_i,a_j)\geq \dfrac{R}{\delta}.$
\end{proof}
Now we generalize above theorem to spaces such that $X=\bigcup\limits_{i=1}^{k} \Delta(y_ixy_{i+1})$ with $|y_ix|\leq R$. Let us call such a space \textit{R-triangular}.
\begin{lemma}
    Let $X$ be a $R-$triangular space with vertex $x$. If $area(X) > c(R)$ then there exists a pair of points such that $K \geq \dfrac{R}{\delta}.$
\end{lemma}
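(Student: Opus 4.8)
The plan is to re-run the strategy of Theorem 3, compensating for the fact that the edges $xy_i$ now have length only $\le R$ (rather than $=R$) by passing to a common intermediate radius and by replacing Lemma 3 with a scale-invariant substitute.

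\emph{From area to cone angle, and selection.} Write $\theta_i$ for the angle of $\Delta_i:=\Delta(y_ixy_{i+1})$ at $x$ and $\Theta:=\sum_i\theta_i$; discarding the zero-area degenerate triangles, we may assume each $0<\theta_i<\pi$. Since $area(X)=\sum_i\tfrac12|xy_i|\,|xy_{i+1}|\sin\theta_i\le\tfrac12R^2\sum_i\sin\theta_i\le\tfrac12R^2\Theta$, the hypothesis $area(X)>c(R)$ forces $\Theta>2c(R)/R^2$. As in the proof of Theorem 3, a greedy sweep through the cyclic order of the edges selects edges $xy_{i_1},\dots,xy_{i_N}$ whose angular positions at $x$ are pairwise at cyclic distance $\ge\pi$ in \emph{both} directions; since each $\theta_i<\pi$, consecutive selected positions are under $2\pi$ apart, so the sweep closes up with $N\ge\Theta/(2\pi)-1$. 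Taking $c(R)$ to be a large enough multiple of $R^2\beta_{B(R)}(\delta)$ — this is what ties $\delta$ to the area bound — makes $N>\beta_{B(R)}(\delta)$. With $v_j:=(y_j-x)/|xy_j|$ the unit direction of the $j$-th edge, the $N$ points $x+Rv_{i_1},\dots,x+Rv_{i_N}$ lie on $\partial B(x,R)$, so by the definition of $\beta$ two of them — those of edges $xy_a,xy_b$ say — satisfy $R\,|v_a-v_b|\le\delta$.

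\emph{The substitute for Lemma 3.} \textbf{Claim:} if $p,q\in X$ lie on edges $xy_a,xy_b$, both at distance $\rho$ from $x$, and these edges are at cyclic distance $\ge\pi$ at $x$ in both directions, then $|pq|_X\ge\rho$. Since $X$ is a compact length space, pick a minimal geodesic $\gamma$ from $p$ to $q$; note $|px|_X=|qx|_X=\rho$, since each $xy_j$ is a straight $\mathbf{R}^3$-segment lying in $X$, hence a minimal geodesic of $X$. If $\gamma$ meets the open ball $B(x,\rho/2)$ at a point $z$, then $|pq|_X=\mathrm{length}(\gamma)\ge|pz|_X+|zq|_X\ge(\rho-|xz|_X)+(\rho-|xz|_X)>\rho$. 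Otherwise $\gamma$ stays at distance $\ge\rho/2$ from $x$; away from $x$ the space $X$ is flat with the single cone point $x$, so in polar coordinates $(f,g)$ about $x$ (radius $f$, angle $g\in\mathbf{R}/\Theta\mathbf{Z}$) the metric is $df^2+f^2\,dg^2$ and $|\nabla g|=1/f\le 2/\rho$ along $\gamma$; as $\gamma$ joins the angular position of $xy_a$ to that of $xy_b$, the total variation of $g$ along it is at least the cyclic distance between these edges, which is $\ge\pi$, so $\mathrm{length}(\gamma)\ge(\rho/2)\pi\ge\rho$. (A minimal geodesic may be taken not to run along an edge $xy_c$ — if it does, it passes through $x$, which only helps — and touching the outer boundary $y_1\cdots y_k$ leaves the coordinate estimate intact.)

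\emph{Conclusion.} Set $\rho:=\min(|xy_a|,|xy_b|)>0$ and take $p:=x+\rho v_a$, $q:=x+\rho v_b$, points of $X$ on the edges $xy_a,xy_b$. Then $|pq|_{\mathbf{R}^3}=\rho|v_a-v_b|\le(\rho/R)\delta$ while $|pq|_X\ge\rho$ by the Claim, so $K(p,q)=|pq|_X/|pq|_{\mathbf{R}^3}\ge R/\delta$. The crux is the Claim: the naive statement ``cyclically far points of $X$ are intrinsically far by a fixed amount'' is simply false once the edges are short, since then $X$ can fold in near $x$ and provide a shortcut; passing to the common radius $\rho$ and using the polar-coordinate estimate is precisely what forces the dichotomy that such a shortcut either passes near $x$ (triangle inequality wins) or must sweep an angle $\ge\pi$ around $x$ at radius $\ge\rho/2$ (hence has length $\ge\pi\rho/2$).
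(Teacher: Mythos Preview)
Your proof is correct, but it takes a markedly different route from the paper's. The paper argues in two lines: extend each edge $xy_i$ along its ray to length $R$, obtaining a genuine triangular space $\bar X$ with $area(\bar X)\ge area(X)>c(R)$; Theorem 3 applied to $\bar X$ yields a pair $a_i,a_j$ with $\bar K(a_i,a_j)\ge R/\delta$, and since the ratio function is invariant under a homothety centred at $x$, shrinking $\bar X$ by a factor $\lambda$ small enough that $\lambda\bar X\subset X$ transplants this pair into $X$. You instead rerun the selection--packing mechanism of Theorem 3 directly on $X$, and replace Lemma 2 by your polar--coordinate Claim (which forces you to work with the coarser angular threshold $\pi$ rather than $\pi/3$; this only affects the implicit constant in $c(R)$). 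The paper's approach is shorter and makes clear that the lemma is really just Theorem 3 plus scale invariance. On the other hand, your Claim makes explicit a step the paper leaves tacit: one needs to know that $|pq|_X$ for the scaled pair is not smaller than $|pq|_{\lambda\bar X}$, which is not immediate from $\lambda\bar X\subset X$ (inclusion goes the wrong way) but follows once one observes that both sit isometrically in the same Euclidean cone over $x$ and that cone geodesics between points at radius $\le\lambda R$ remain at radius $\le\lambda R$.
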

\begin{proof}
    Let $y_1...,y_k$ be vertices of $X$. For each $i$ pick $\bar{y_i}$ on the ray $[x,y_i)$ such that $|\bar{y_i}x|=R$. Then $\bar{X}=\bigcup\limits_{i=1}^{k} \Delta(\bar{y_i}x\bar{y}_{i+1})$ satisfy conditions of Theorem 2. Then there is a pair of points on $\bar{X}$ with $\bar{K}\geq \dfrac{R}{\delta}$ where $\bar{K}$ is the corresponding ratio function on $\bar{X}$. Observe that the ratio function $\bar{K}$ is scale invariant about $x$. Hence if we pick $\lambda >0$ such that $\lambda \bar{X} \subset X$ we find a pair on $X$ with required property where $\lambda \bar{X}$ is $\bar{X}$ shrunk down towards $x$ by a factor of $\lambda$.
\end{proof}

Observe that if $P$ is a polyhedron and $x\in P$ a vertex, then Lemma 5 can be applied to union of all faces containing $x$ as it can be triangulated to be an $R-$triangular space.

Let $x,y_1,y_2,y_3$  be distinct points such that $|xy_i|_{\mathbf{R}^3}=R$ for $i=1,2,3$. Consider the union of triangles $\Delta(y_1xy_2)$ and $\Delta(y_2xy_3)$. Denote it by $X$. Observe that if we replace each triangle with a surface of same boundary in a non-self-intersecting way to obtain $\bar{X}$ we have $area(X)\leq area(\bar{X})$ and $|y_iy_j|_X\leq |y_iy_j|_{\bar{X}}$. Thus we can generalize theorem 2 to such surfaces. This is made precise in the following:

\begin{theorem}
    Let $X$ be an $R$-triangular space with vertex $x$ such that $area(X) > c(R)$. Assume that $\bar{X}=B(x,r)$ where $r\leq R$ is a surface such that $1$-skeleton of $X$ isometrically embeds with $x$ fixed and boundary goes to boundary. Then there is a pair such that $K_{\bar{X}}\geq \dfrac{R}{\delta}$.
\end{theorem}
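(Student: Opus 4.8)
The plan is to derive this from the lemma for $R$-triangular spaces by a metric monotonicity. First I would apply that lemma to $X$ itself: since $area(X)>c(R)$, it yields a pair $p,q$ with $K_X(p,q)\ge R/\delta$, and, inspecting its proof --- which, after a dilation centred at $x$, reduces to the theorem for triangular spaces, whose pair is chosen among the outer vertices --- one sees that $p$ and $q$ may be taken to lie on radial edges $[x,y_i]$ and $[x,y_j]$, hence on the $1$-skeleton of $X$. By hypothesis this $1$-skeleton sits inside $\bar X\subset\mathbf{R}^3$ as the union of those straight radial segments and the outer cycle ($x$ fixed, boundary to boundary), so $p$ and $q$ are points of $\bar X$ at their original Euclidean positions; in particular $|pq|_{\mathbf{R}^3}$ is the same measured in $X$ or in $\bar X$. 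It therefore suffices to prove the monotonicity statement $|ab|_{\bar X}\ge|ab|_X$ for all $a,b$ on the $1$-skeleton of $X$, since then $K_{\bar X}(p,q)=|pq|_{\bar X}/|pq|_{\mathbf{R}^3}\ge|pq|_X/|pq|_{\mathbf{R}^3}=K_X(p,q)\ge R/\delta$.

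For the monotonicity, note that the $1$-skeleton subdivides $X$ into its flat faces $T_1,\dots,T_k$ and $\bar X$ into matching patches $\bar T_1,\dots,\bar T_k$ with $\partial\bar T_m=\partial T_m$ (three straight segments each). Take a shortest path $\bar\sigma$ in $\bar X$ from $a$ to $b$ (it exists, $\bar X$ being compact) and let $a=r_0,r_1,\dots,r_N=b$ be the successive points where $\bar\sigma$ crosses from one patch into the next; each $r_l$ lies on a radial edge, hence in $X$. If the $l$-th sub-arc has length $\ell_l$ and stays in the patch $\bar T$ whose flat face is $T$ (both depending on $l$), then
\[
\ell_l\;\ge\;|r_{l-1}r_l|_{\bar T}\;\ge\;|r_{l-1}r_l|_{\mathbf{R}^3}\;=\;|r_{l-1}r_l|_{T}\;\ge\;|r_{l-1}r_l|_X,
\]
the middle inequality because a surface patch in $\mathbf{R}^3$ has intrinsic metric at least the Euclidean one, the equality because $T$ is a planar convex triangle with $r_{l-1},r_l$ on its boundary (so the Euclidean chord lies in $T$ and realises the $T$-distance), and the last inequality because $T\subset X$. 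Summing over $l$ and using the triangle inequality in $X$ gives $|ab|_{\bar X}=\sum_l\ell_l\ge\sum_l|r_{l-1}r_l|_X\ge|ab|_X$, which completes the argument.

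The step I expect to require the most care is this monotonicity, and within it the bookkeeping that a shortest path of $\bar X$ genuinely splits into finitely many arcs, each inside one patch and meeting the $1$-skeleton only at its endpoints; the degenerate arcs --- running along an edge, passing through the apex $x$, or touching an edge without crossing --- are absorbed by subdividing further at $1$-skeleton points (for an arc lying in an edge the displayed chain still holds, now as equalities). I would also state explicitly that the argument lives in $\mathbf{R}^3$: the hypothesis that the $1$-skeleton of $X$ embeds isometrically in $\bar X$ with $x$ fixed and boundary to boundary is used precisely to guarantee that inside $\bar X$ the radial edges and outer cycle are the original straight Euclidean segments, which is what makes both ``intrinsic $\ge$ extrinsic'' and the flat-triangle equality available; were $\bar X$ given only abstractly, these would be replaced by the corresponding comparisons with whatever ambient defines $K_{\bar X}$.
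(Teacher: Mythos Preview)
Your proposal is correct and follows the same idea the paper uses: the paper does not give a formal proof of this theorem but only the preceding observation that replacing each flat triangle by a surface patch with the same boundary can only increase intrinsic distances between $1$-skeleton points (the inequality $|y_iy_j|_X\le|y_iy_j|_{\bar X}$), which together with Lemma~5 yields the conclusion. Your write-up spells out exactly this monotonicity via the patch-by-patch decomposition of a $\bar X$-geodesic and the flat-triangle equality $|r_{l-1}r_l|_{\mathbf R^3}=|r_{l-1}r_l|_T$, and you correctly trace through Lemma~5 to see that the witnessing pair lies on radial edges; this is more detail than the paper supplies, but the strategy is the same.
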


\begin{centering}
    \section{Counting Geodesics With Large Distortion}
\end{centering}

We wish give a lower bound on number of pairs of points such that $K\geq \frac{J_n}{3\delta}$. But we also would like to neglect the pairs appearing because of the continuity of $K$. For this reason we want pairs such that they are sufficiently far away from each other in $X\times X$. We can do this by increasing lower bound on volume.

In the statements of Theorem 2 and Theorem 3 assume we replace packing number term in lower bound of volume by $\beta+N$ where $N >0$ and $\beta$ is any of the packing number depending on the context. Then on the proofs the number of chosen points would become $N$ more. Which would mean we can keep picking pairs until number of points is at least $\beta$. This amounts in $N+1$ pairs. And if we do the picking in such a way that we forget one of the points of previous pair at each step, our obtained $N+1$ pairs have the property that on $X\times X$ each is at least far away from each other by a fixed constant.

\bibliographystyle{plain} 
\bibliography{references} 

@article{Jung1901,
author = {Jung, Heinrich},
journal = {Journal für die reine und angewandte Mathematik},
pages = {241-257},
title = {Ueber die kleinste Kugel, die eine räumliche Figur einschliesst.},
url = {http://eudml.org/doc/149122},
volume = {123},
year = {1901},
}

@book{alexandrovSelectedWorksII,
  editor    = {Kutateladze, S. S.},
  title     = {A.D. Alexandrov: Selected Works Part II: Intrinsic Geometry of Convex Surfaces},
  edition   = {1},
  year      = {2005},
  publisher = {Chapman and Hall/CRC},
  doi       = {10.1201/9780203643846},
}

@article {MR28052,
    AUTHOR = {Zalgaller, V. A.},
     TITLE = {A problem on the maximum ratio of the distance on a surface to
              the distance in space},
   JOURNAL = {Uspehi Matem. Nauk (N.S.)},
  FJOURNAL = {Uspehi Matem. Nauk (N.S.)},
    VOLUME = {3},
      YEAR = {1948},
    NUMBER = {3(25)},
     PAGES = {202--207},
   MRCLASS = {52.0X},
  MRNUMBER = {28052},
MRREVIEWER = {G.\ Y.\ Rainich},
}

@book{Chavel_2006, place={Cambridge}, edition={2}, series={Cambridge Studies in Advanced Mathematics}, title={Riemannian Geometry: A Modern Introduction}, publisher={Cambridge University Press}, author={Chavel, Isaac}, year={2006}, collection={Cambridge Studies in Advanced Mathematics}}

\Addresses
\end{document}